\documentclass[12pt,a4paper]{article}
\usepackage{amssymb, amsmath, amscd, euscript, amsthm}

\usepackage[utf8]{inputenc}
\usepackage[english]{babel}

\DeclareMathOperator\lcm{lcm}
\newtheorem{theorem}{Theorem}
\newtheorem{lemma}{Lemma}
\author{F. Petrov}

\title{Asymptotics of Landau--Okhotin function}
\begin{document}
\maketitle
\begin{abstract}
Landau function $g(n)$ \cite{L} 
is the maximal possible least common multiple
of several positive integers with sum not exceeding $n$.
Under additional assumptions that these
numbers are the differences of disjoint
arithmetic progressions the maximum is denoted 
$\tilde{g}(n)$, it was introduced by Okhotin \cite{O}.
We find a sharp logarithmic asymptotics of $\tilde{g}(n)$.
\end{abstract}

First of all, we introduce some notations:

$X\sim Y$ means $X/Y\to 1$;

$X\preccurlyeq Y$ means $X\leqslant (1+o(1))Y$;

$X\succcurlyeq Y$ means $X\geqslant (1+o(1))Y$.

$\nu_p(m)$ denotes the exponent of a prime $p$
in the prime factorization of $m$.

\smallskip

Let $n$ be a positive integer. Consider
the arrays  $(\pi_1,\ldots,\pi_k)$ of positive integers
such that

(i) $\pi_1+\ldots+\pi_k\leqslant n$;

(ii) there exist disjoint arithmetic progressions
of the form $f_i+\pi_i\mathbb{Z}$,
$f_i\in \mathbb{Z}$,
$i=1,\ldots,k$. 

The maximal possible value
of $\lcm(\pi_1,\ldots,\pi_k)$ under conditions
 (i-ii) is called \emph{Landau--Okhotin function} $\tilde{g}(n)$.

Landau function $g(n)$ \cite{L}
is defined analogously but without condition
(ii). $g(n)$ attracts a lot of interest as
it is the maximal order of an element
of the symmetric group $S_n$.

Function $\tilde{g}$ was studied by A. Okhotin 
\cite{O} in relation with finite automata theory.

In \cite{O} it is proved that \begin{equation}\label{1}
\frac29n\log^{2} n\preccurlyeq \log^3 \tilde{g}(n)\preccurlyeq
2n\log^{2} n\end{equation}

Note that
$\log g(n)\sim \sqrt{n\log n}$ \cite{L,M,S}, i.e. 
the condition (ii)
changes asymptotics dramatically. On the informal
level the explanation is that for satisfying (ii),
the numbers $\pi_i$ must have large common divisors,
which decreases their least common multiple.

Our goal is to improve \eqref{1}
by proving the sharp logarithmic asymptotics of
$\tilde{g}$:

\begin{theorem}
\begin{equation}\label{2} 
\log^3 \tilde{g}(n)\sim \frac14n\log^{2} n.\end{equation}
\end{theorem}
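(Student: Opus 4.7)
The plan is to establish matching lower and upper bounds for $\log^3\tilde g(n)$.

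\emph{Lower bound.} I would exhibit a configuration $\pi_i=Nq_i$ with a common factor $N$ and pairwise coprime $q_i$. Since $\gcd(\pi_i,\pi_j)=N$, the disjoint--AP condition reduces to the $f_i$ being distinct modulo $N$, forcing $k\le N$. Writing $M=n/N$, the task becomes to maximize $\log N+\log\lcm(q_i)$ subject to $\sum q_i\le M$ and $k\le N$. Applying the Landau bound $\log g(M)\sim\sqrt{M\log M}$ (attained with a prime count $\sim 2\sqrt{M/\log M}$), the residue constraint forces $N^3\log(n/N)\gtrsim 4n$; a straightforward optimization already recovers Okhotin's coefficient $2/9$. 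To push to the sharp $1/4$ the construction must be tuned more delicately, likely with $N\sim c\,(n/\log n)^{1/3}$ and the $q_i$'s chosen so that both the Landau extremum and the residue budget are simultaneously saturated (with a slightly different balance than the one implicit in Okhotin's argument).

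\emph{Upper bound.} For any admissible $(\pi_1,\ldots,\pi_k)$ with $L=\lcm(\pi_i)$, I would leverage three constraints: the budget $\sum\pi_i\le n$; the density $\sum 1/\pi_i\le 1$ (a consequence of the APs being disjoint in $\mathbb Z$); and, crucially, the pairwise intersecting condition $\gcd(\pi_i,\pi_j)>1$ (two APs with coprime moduli always meet, by CRT). Without this last input one obtains only Landau's much weaker $\log L\preccurlyeq\sqrt{n\log n}$. The key reduction is to show, up to a negligible loss, that there is a prime $p$ dividing nearly all of the $\pi_i$'s and carrying nearly all of $\log L$; the common-factor analysis of the lower bound then applies to the resulting sub-family and yields the matching coefficient $1/4$.

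\emph{Main obstacle.} The central difficulty is locating the coefficient $1/4$ on both sides. On the lower-bound side, Okhotin's construction gives $2/9$, and the improvement to $1/4$ requires a subtle reparametrization of the residue/budget trade-off. On the upper-bound side, Okhotin's coefficient $2$ must be sharpened by a factor of $8$; this demands showing that ``non-star'' intersecting configurations---where the $\pi_i$ pairwise share primes without a common one---cannot exceed the star optimum. The latter is the crux: a priori, exotic intersecting families (projective-plane--type, sunflower-free, etc.) might hope to exploit the density constraint more efficiently than a simple common-factor family, and ruling this out is the technical heart of the proof.
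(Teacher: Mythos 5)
Your proposal is an outline that leaves both of its decisive steps unresolved, and the specific routes you sketch would not reach the constant $1/4$. On the lower bound: a single common factor $N$ with pairwise coprime $q_i$ forces $k\le N$ and $\sum q_i\le n/N$, and since pairwise coprime $q_i>1$ must dominate the first $k$ primes, optimizing within this one-level family lands you exactly at Okhotin's $\frac29 n\log^2 n$; no tuning of $N$ escapes it. The paper's construction is genuinely multi-level: it uses $s$ distinct common factors $r_1N,\dots,r_sN$ with $r_i=\lceil Mi^{-1/2}\rceil$, $M=\sqrt1+\dots+\sqrt s$, assigns $N/r_i$ residue classes mod $N$ to level $i$ (feasible because $\sum 1/r_i\le 1$), splits each class into $r_i$ progressions of difference $r_iN$, and multiplies each resulting modulus by a fresh prime. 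The point is that the density budget $\sum 1/r_i\le1$ is spent unevenly, and the choice $r_i\propto i^{-1/2}$ optimizes the trade-off, improving $2/9$ by the factor $\kappa=9/8$ to give exactly $1/4$. This graded structure is the missing idea, not a reparametrization of the star construction.

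On the upper bound: your plan hinges on reducing to a single prime dividing nearly all the $\pi_i$ and carrying nearly all of $\log L$, and you yourself flag that ruling out non-star intersecting configurations is the crux --- i.e.\ the step is not supplied. Moreover the density input you propose, $\sum 1/\pi_i\le1$, only yields $k\le\sqrt n$ and is far too weak. The paper avoids any structural reduction: it writes $\pi_i=q_iw_i$ with $w_i=\gcd(\pi_i,\lcm\{\pi_j:j\ne i\})$, shows $\lcm(\pi_1,\dots,\pi_k)=q_1\cdots q_k\,\lcm(w_1,\dots,w_k)$ with the $q_i$ pairwise coprime (hence at least the first $k$ primes), and --- crucially --- observes that the progressions $f_i+w_i\mathbb{Z}$ are already disjoint, so $\sum 1/w_i\le1$. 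Cauchy--Schwarz then gives $n\ge\left(\sum q_iw_i\right)\left(\sum 1/w_i\right)\ge\left(\sum\sqrt{q_i}\right)^2\succcurlyeq\frac49k^3\log k$, whence $k^3\preccurlyeq\frac{27}{4}n/\log n$, and AM--GM bounds $q_1\cdots q_k\le(n/k^2)^k$, while $\lcm(w_1,\dots,w_k)$ is shown to be negligible separately. This single device both sharpens Okhotin's constant by the required factor of $8$ and disposes of arbitrary intersection patterns without classifying them; as it stands, your proposal identifies the right difficulties but overcomes neither.
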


Theorem 1 answers a question posed in \cite{O}.

\smallskip

We use many times the following simple

\begin{lemma}
If $X,Y$ go to $+\infty$ and 
$X\sim c Y^\alpha \log^\theta Y$ for $x,\alpha>0$ and $\theta\in \mathbb{R}$,
then $Y^\alpha \sim c^{-1} \alpha^\theta X/\log^\theta X$.
\end{lemma}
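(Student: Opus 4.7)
\medskip

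\noindent\textbf{Proof proposal for Lemma 1.}

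The plan is a two-step logarithmic bootstrap. First I would take logarithms of the given relation $X\sim cY^\alpha\log^\theta Y$ to obtain
\[
\log X = \log c + \alpha\log Y + \theta\log\log Y + o(1).
\]
Since $Y\to+\infty$ (so $\log Y\to+\infty$) and $\alpha>0$, the term $\alpha\log Y$ dominates the additive constant $\log c$ and the term $\theta\log\log Y$; hence $\log X\sim \alpha\log Y$, equivalently $\log Y\sim \alpha^{-1}\log X$. Note this also forces $\log X\to +\infty$, which is what we need to make the target expression $\log^\theta X$ meaningful and well-behaved.

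Second, I would substitute this back into the original relation. Solving $X\sim cY^\alpha\log^\theta Y$ for $Y^\alpha$ gives
\[
Y^\alpha \sim c^{-1} X\log^{-\theta}Y,
\]
and replacing $\log Y$ by its asymptotic equivalent $\alpha^{-1}\log X$ yields $\log^{-\theta}Y\sim \alpha^\theta\log^{-\theta}X$. Combining the two relations produces the claimed $Y^\alpha\sim c^{-1}\alpha^\theta X/\log^\theta X$.

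The only point requiring a bit of care is the passage $\log Y\sim \alpha^{-1}\log X\Rightarrow \log^\theta Y\sim \alpha^{-\theta}\log^\theta X$ for arbitrary real $\theta$ (including negative values); this follows at once from continuity of $t\mapsto t^\theta$ on $(0,\infty)$ together with $\log X,\log Y\to+\infty$. I expect no serious obstacle: the lemma is essentially a bookkeeping device that will be invoked repeatedly later to invert asymptotics of the form ``$\log X$ is known up to a factor of type $Y^\alpha\log^\theta Y$'', and its content is exactly the standard inversion of $X=cY^\alpha\log^\theta Y$ in logarithmic scale.
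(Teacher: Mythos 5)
Your proposal is correct and follows essentially the same route as the paper: take logarithms to get $\log X\sim\alpha\log Y$, then substitute $\log Y\sim\alpha^{-1}\log X$ back into $Y^\alpha\sim c^{-1}X\log^{-\theta}Y$. The extra remark about handling arbitrary real $\theta$ via continuity of $t\mapsto t^\theta$ is a small but welcome addition that the paper leaves implicit.
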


\begin{proof}
If $X\sim Z$, then $\log Z=\log X+o(1)$, and in particular 
$\log Z\sim \log X$. Thus $$\log X\sim \log c+\alpha \log Y+\theta
\log \log Y\sim \alpha \log Y,$$
and 
$$
Y^\alpha\sim c^{-1} X\log^{-\theta} Y\sim c^{-1} \alpha^\theta \cdot X\cdot \log^{-\theta} X.
$$
\end{proof}

Also the following equivalent forms of Prime Numbers Theorem
are used. If $p_1<p_2<\ldots$ is a sequence of primes, then

(A) $p_k\sim k \log k$;

(B) $p_1^\theta+\ldots+p_k^\theta\sim \frac1{\theta+1} k^{\theta+1}\log^\theta k$
for all $\theta>-1$;

(C) $\log p_1+\ldots+\log p_k\sim k\log k$;

(D) $\log \lcm(1,2,\ldots,k)\sim k$.

\begin{proof}[Proof of Theorem 1]
We start with a lower bound of $\tilde{g}(n)$. 

Fix a large positive integer $s$.
Denote $M:=\sqrt{1}+\sqrt{2}+\ldots+\sqrt{s}$,
$r_i:=\lceil M i^{-1/2}\rceil$ for
$i=1,2,\ldots,s$. 
Then $\sum 1/r_i\leqslant \sum \sqrt{i}/M\leqslant 1$. 
Let $N$ be a positive integer divisible by 
 $\lcm(r_1,\ldots,r_s)$. Consider $k:=sN$ numbers $\theta_1,\ldots,\theta_{sN}$:
$\theta_1=\ldots=\theta_N=r_1N$, $\theta_{N+1}=\ldots=\theta_{2N}=r_2N$ 
and so on. 

Our current goal is to build $sN$ disjoint arithmetic
progressions with differences 
$\theta_1,\ldots,\theta_{sN}$. 
For that, choose $s$ disjoint sets of residues
modulo $N$, where the $i$-th set  ($i=1,\ldots,s$) 
contains $N/r_i$ residues. This is possible
since $N/r_i$ are positive integers whose sum does not
exceed $N$. Every residue modulo $N$ naturally
corresponds to an infinite progression with difference 
$N$. For every residue of the $i$-th set partition
this progression onto $r_i$ progressions
with difference  $r_iN$. Totally we get 
$r_i\cdot (N/r_i)=N$ 
arithmetic progressions 
with difference $r_iN$ for every $i=1,\ldots,s$, 
and these progressions are disjoint. This is what we need.

Let now $p_1,\ldots,p_{k}$ be the first
$k=sN$ prime numbers in the increasing order. 
Let the number 
$N$ divisible by $\lcm(r_1,\ldots,r_s)$
be maximal possible for which 
$\theta_1 p_1+\ldots+\theta_k p_{k}\leqslant n$.

Denote $\pi_i=\theta_ip_i$. 
Conditions (i) and (ii) are enjoyed, and for the least
common multiple we have
$$
\log \lcm(\pi_1,\ldots,\pi_k)\geqslant \log p_1\ldots p_k \sim k \log k,
$$
thus the lower bound of $\tilde{g}(n)$ reduces
to a lower bound of $k$. Denoting
$P_m=p_1+\ldots+p_m\sim \frac12 m^2\log m$, we have  
\begin{align*}
\sum \theta_i p_i\sim N\left(r_1P_N+r_2(P_{2N}-P_N)+\ldots+r_s(P_{sN}-P_{(s-1)N})\right)\\ \sim
\frac12 N^3 \log N \left(r_1+3r_2+\ldots+(2s-1)r_s\right).
\end{align*}

From maximality of $N$ we get $N^{3+o(1)}=n$, $\log N\sim \frac13\log n$,
and
\begin{align*}
N^3\sim \frac{6n}{\log n(r_1+3r_2+\ldots(2s-1)r_s)},\\ (k\log k)^3\sim \frac1{27}s^{3}N^3\log^3 n\sim \frac29 \kappa(s) n\log^2n,\quad \kappa(s):=\frac{s^3}{r_1+3r_2+\ldots+(2s-1)r_s}.
\end{align*}

Thus for proving the required
bound $\log^3 \tilde{g}(n)\succcurlyeq \frac14 n\log^2 n$ 
it remains to prove that the constant
 $\kappa(s)$ may be arbitrarily close to
 $9/8$. For large $s$ we have
$$
\kappa(s)\sim \frac{s^3}{\sum_{i=1}^s 2iMi^{-1/2}}\sim \frac{s^3}{2M^2}\to 9/8,
$$
since $M\sim \int_0^s \sqrt{x}dx=\frac23 s^{3/2}$. 
The lower bound is proved.
\medskip

Now the upper bound. Let $\pi_1,\ldots,\pi_k$ and $f_1,\ldots,f_k$ 
satisfy conditions (i-ii). 
Put $\pi_i=q_i w_i$, where 
$w_i=\gcd(\pi_i,\lcm\{\pi_j:j\ne i\})$. If $q_i=1$, 
we may simply remove the element $\pi_i$ 
(conditions (i-ii) are still satisfied and the
least common multiple is not changed), 
so let us suppose that this does not happen.

Note that \begin{equation}\label{lcm-above}\lcm(\pi_1,\ldots,\pi_k)= 
q_1\ldots q_k \lcm(w_1,\ldots w_k).\end{equation}

Indeed, take an arbitrary prime
 $p$, denote $\nu_p(\pi_i)=\alpha_i$.
Without loss of generality we have $\alpha_1\geqslant \ldots \geqslant \alpha_k$. 
Then $\nu_p(\lcm(\pi_1,\ldots,\pi_k))=\alpha_1$,
$\nu_p(q_1)=\alpha_1-\alpha_2$, $\nu_p(q_i)=0$ for $i>1$,
$\nu_p(w_i)=\min(\alpha_2,\alpha_i)$ for $i=1,\ldots,k$,
therefore $\nu_p(\lcm(w_1,\ldots w_k))=\alpha_2$.
Thus $\nu_p$ of both sides of \eqref{lcm-above}
equals $\alpha_1$.

We proceed with bounding  $q_1\ldots q_k$.

The numbers $f_i$ and $f_j$ are different
modulo
$\gcd(\pi_i,\pi_j)=\gcd(w_i,w_j)$,
thus the arithmetic progressions
$f_i+w_i\mathbb{Z}$ are disjoint, and the sum of their densities
(in the set of integers) does not exceed 1:
$1/w_1+\ldots+1/w_k\leqslant 1$. 

Since $q_i$'s are coprime, they are not
less than $k$ first prime numbers.
Applying, like in \cite{O}, the Cauchy--Bunyakovskiy--Schwarz 
inequality and the inequality between arithmetic and geometric means,
we get
\begin{align*}
n\geqslant \left(\sum q_iw_i\right)\left(\sum 1/w_i\right)\geqslant \left(\sum \sqrt{q_i}\right)^2\succcurlyeq \left(\frac23k^{3/2}\log^{1/2} k\right)^2,\\
k^3\preccurlyeq \frac{27}4\frac{n}{\log n}, \\
q_1\ldots q_k\leqslant \left(\frac{\sqrt{q_1}+\ldots+\sqrt{q_k}}k\right)^{2k}
\leqslant \left(\frac{n}{k^2}\right)^k,\\
\log^3 q_1\ldots q_k\leqslant k^3\log^3 \frac{n}{k^2}\preccurlyeq \frac14n\log^2 n
\end{align*}
(we used that the function  
$k\log(nk^{-2})$ is increasing on the 
segment $[1,n^{1/3}]$).

Now we bound the factor 
$\lcm(w_1,\ldots,w_k)$ in \eqref{lcm-above}.

Let $w_1,\ldots,w_l$ exceed  $n^{1/3}$, and 
$w_{l+1},\ldots,w_k$ be at most $n^{1/3}$. 
the logarithm of the least common multiple of
 $w_{l+1},\ldots,w_k$ does not exceed the logarithm of
 the least common multiple of all numbers from 1 to $n^{1/3}$, which by 
 (D) is
$O(n^{1/3})=o(n^{1/3}\log^{2/3} n)$. 

For bounding the least common multiple of
$w_1,\ldots,w_l$ we start with bounding $l$:
$$
n\geqslant q_1w_1+\ldots+q_lw_l\geqslant 
n^{1/3} (q_1+\ldots+q_l)\geqslant c_0 n^{1/3}l^2\log l 
$$
for certain universal constant $c_0>0$.

Thus
$$
\log \lcm(w_1,\ldots,w_l)\leqslant 
\log (w_1\ldots w_l)\leqslant
l\log n=O(n^{1/3}\log^{1/2} n)=o(n^{1/3}\log^{2/3} n).
$$

Therefore $$\log \lcm(w_1,\ldots,w_k)
\leqslant \log \lcm(w_1,\ldots,w_l)+
\log \lcm(w_{l+1},\ldots,w_k)=o(n^{1/3}\log^{2/3} n),$$
as needed.
\end{proof}

I am grateful to A. Okhotin for attention to the work.

\end{document}